\newtheorem{theorem}{Theorem}
\newtheorem{corollary}[theorem]{Corollary}
\newtheorem{lemma}[theorem]{Lemma}
\theoremstyle{remark}
  \newtheorem{remark}[theorem]{Remark}}
\theoremstyle{definition}
\newcommand{\PP}[0]{\ensuremath{\mathds{P}}}
\newcommand{\ZZ}[0]{\ensuremath{\mathds{Z}}}
\newcommand{\AF}[0]{\ensuremath{\mathds{A}}}
\newcommand{\QQ}[0]{\ensuremath{\mathds{Q}}}
\newcommand{\TD}[0]{\ensuremath{\mathbf{T}}}
\newcommand{\KK}[0]{\ensuremath{\mathbf{k}}}
\newcommand{\OO}[0]{\ensuremath{\mathcal{O}}}
\newcommand{\DD}[0]{\ensuremath{\mathfrak{D}}}
\newcommand{\diag}[0]{\ensuremath{\operatorname{diag}}}
\newcommand{\fract}[0]{\ensuremath{\operatorname{Frac}}}
\newcommand{\spec}[0]{\ensuremath{\operatorname{Spec}}}
\newcommand{\Aut}[0]{\ensuremath{\operatorname{Aut}}}
\newcommand{\Lie}[0]{\ensuremath{\operatorname{Lie}}}
\newcommand{\chara}[0]{\ensuremath{\operatorname{Char}}}
\newcommand{\rank}[0]{\ensuremath{\operatorname{rank}}}
\newcommand{\trdeg}[0]{\ensuremath{\operatorname{tr.deg}}}
\begin{document}

\title{Roots of the affine Cremona group}

\author{Alvaro Liendo}

\address{Mathematisches Institut,
         Universit\"at Basel,
         Rheinsprung 21,
         CH-4051 Basel,
         Switzerland.}
\email{alvaro.liendo@unibas.ch}

\date{\today}

\thanks{ \mbox{\hspace{11pt}}{\it 2010 Mathematics Subject
    Classification}: 14R10, 14R20, 14L30,13N15.\\
  \mbox{\hspace{11pt}}{\it Key words}: affine Cremona group, roots of
  an algebraic group, torus actions, locally nilpotent derivation.}

\begin{abstract}
  Let $\KK^{[n]}=\KK[x_1,\ldots,x_n]$ be the polynomial algebra in $n$
  variables and let $\AF^n=\spec\KK^{[n]}$. In this note we show that
  the root vectors of the affine Cremona group $\Aut(\AF^n)$ with
  respect to the diagonal torus are exactly the locally nilpotent
  derivations $\mathbf{x}^\alpha\tfrac{\partial}{\partial x_i}$, where
  $\mathbf{x}^\alpha$ is any monomial not depending on $x_i$. This
  answers a question due to Popov.
\end{abstract}

\maketitle

\section*{Introduction}

Letting $\KK$ be an algebraically closed field of characteristic zero,
we let $\KK^{[n]}=\KK[x_1,\ldots,x_n]$ be the polynomial algebra in $n$
variables, and $\AF^n=\spec \KK^{[n]}$ be the affine space. The
Cremona group $\Aut(\AF^n)$ is the group of automorphisms of $\AF^n$,
or equivalently, the group of $\KK$-automorphisms of $\KK^{[n]}$. We
define $\Aut^*(\AF^n)$ as the subgroup of volume preserving
automorphisms i.e.,
$$\Aut^*(\AF^n)=\left\{\gamma\in \Aut(\AF^n) \mid \det\left(
    \frac{\partial}{\partial
      x_i}\gamma(x_j)\right)_{i,j}=1\right\}\,.$$ %
The groups $\Aut(\AF^n)$ and $\Aut^*(\AF^n)$ are infinite
dimensional algebraic groups \cite{Sha66,Kam79}.

It follows from \cite{Bia66,Bia67} that the maximal dimension of an
algebraic torus contained in $\Aut^*(\AF^n)$ is $n-1$. Moreover, every
algebraic torus of dimension $n-1$ contained in $\Aut^*(\AF^n)$ is
conjugated to the diagonal torus
$$\TD=\left\{\gamma=\diag(\gamma_1,\ldots,\gamma_n)\in \Aut^*(\AF^n)
  \mid \gamma_1\cdots \gamma_n=1\right\}\,.$$

A $\KK$-derivation $\partial$ on an algebra $A$ is called locally
nilpotent (LND for short) if for every $a\in A$ there exists $k\in
\ZZ_{\geq 0}$ such that $\partial^k(a)=0$. If
$\partial:\KK^{[n]}\rightarrow \KK^{[n]}$ is an LND on the polynomial
algebra, then $\exp(t\partial)\in \Aut^*(\AF^n)$, for all $t\in \KK$
\cite{Fre06}. Hence, $\partial$ belongs to the Lie algebra
$\Lie(\Aut^*(\AF^n))$.

In analogy with the notion of root from the theory of algebraic groups
\cite{Spr98}, Popov introduced the following definition.  A non-zero
locally nilpotent derivation $\partial$ on $\KK^{[n]}$ is called a
root vector of $\Aut^*(\AF^n)$ with respect to the diagonal torus
$\TD$ if there exists a character $\chi$ of $\TD$ such that
$$\gamma\circ\partial\circ\gamma^{-1}=\chi(\gamma)\cdot\partial,\quad\mbox{for
  all}\quad \gamma\in\TD\,.$$ %
The character $\chi$ is called the root of $\Aut^*(\AF^n)$ with
respect to $\TD$ corresponding to $\partial$.

Letting $\boldsymbol{\alpha}=(\alpha_1,\ldots,\alpha_n)\in \ZZ_{\geq
  0}^n$, we let $\mathbf{x}^{\boldsymbol{\alpha}}$ be the monomial
$x_1^{\alpha_1}\cdots x_n^{\alpha_n}$. In this note we apply the
results in \cite{Lie10} to prove the following theorem. This answers a
question due to Popov \cite{FrRu05}.
\begin{theorem} \label{theo}
  If $\partial$ is a root vectors of $\Aut^*(\AF^n)$ with respect to
  the diagonal torus $\TD$, then
  $$\partial=\lambda\cdot\mathbf{x}^{\boldsymbol{\alpha}}
  \cdot\frac{\partial}{\partial x_i}\,,$$ for some $\lambda\in \KK^*$,
  some $i\in \{1,\ldots,n\}$, and some
  $\boldsymbol{\alpha}\in\ZZ_{\geq 0}^n$
  such that $\alpha_i=0$. The corresponding root is the character
  $$\chi:\TD\rightarrow \KK^*,\quad
  \gamma=\diag(\gamma_1,\ldots,\gamma_n)\mapsto
  \gamma_i^{-1}\prod_{j=1}^n\gamma_j^{\alpha_j}\,.$$
\end{theorem}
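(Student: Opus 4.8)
The plan is to translate the defining condition into a statement about the weight decomposition of the Lie algebra $\Lie(\Aut^*(\AF^n))$ under the $\TD$-action, and then use that a root vector must be a homogeneous locally nilpotent derivation whose weight is the root character. First I would make explicit how $\TD$ acts by conjugation on derivations: for $\gamma=\diag(\gamma_1,\ldots,\gamma_n)$ one has $\gamma\cdot x_j=\gamma_j x_j$, so conjugation sends a monomial derivation $\mathbf{x}^{\boldsymbol{\beta}}\partial/\partial x_i$ to $\left(\gamma_i^{-1}\prod_j\gamma_j^{\beta_j}\right)\mathbf{x}^{\boldsymbol{\beta}}\partial/\partial x_i$. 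Hence the space of derivations of $\KK^{[n]}$ is graded by the character lattice $M$ of the big torus, with the monomial derivations $\mathbf{x}^{\boldsymbol{\beta}}\partial/\partial x_i$ (for $\beta_i$ possibly nonzero) spanning the weight spaces; restricting to $\TD=\ker(\gamma_1\cdots\gamma_n)$ collapses weights that differ by a multiple of $(1,\ldots,1)$. The condition $\gamma\circ\partial\circ\gamma^{-1}=\chi(\gamma)\,\partial$ for all $\gamma\in\TD$ says precisely that $\partial$ lies in a single $\TD$-weight space, i.e. $\partial$ is a sum of monomial derivations $\mathbf{x}^{\boldsymbol{\beta}^{(k)}}\partial/\partial x_{i_k}$ all having the same $\TD$-weight $\chi$.

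Next I would invoke \cite{Lie10}. The key input is that a locally nilpotent derivation which is homogeneous with respect to a torus action of complexity one — here $\TD$ has dimension $n-1$ acting on the $n$-dimensional $\AF^n$ — is, up to the choice of a suitable coordinate, of a very restricted shape; concretely, the results there classify homogeneous LNDs on such varieties and force $\partial$ to act as a "fiber derivation", namely $\partial = \mathbf{x}^{\boldsymbol{\alpha}}\,\partial/\partial x_i$ up to scalar, with $\mathbf{x}^{\boldsymbol{\alpha}}$ lying in the kernel, which forces $\alpha_i=0$. In the language of the present setup: among the monomial summands of $\partial$, local nilpotency together with homogeneity rules out any dependence of the differentiated variable on itself and rules out having two summands with different $i_k$ or different $\boldsymbol{\beta}^{(k)}$, because such a sum fails to be locally nilpotent unless the extra terms vanish. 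So I would first reduce, by the weight analysis, to $\partial=\sum_k\lambda_k\mathbf{x}^{\boldsymbol{\beta}^{(k)}}\partial/\partial x_{i_k}$, then apply the classification of \cite{Lie10} to the induced action to conclude there is a single term, that its monomial is in the kernel of $\partial$ (hence does not involve $x_i$), and therefore $\partial=\lambda\,\mathbf{x}^{\boldsymbol{\alpha}}\,\partial/\partial x_i$ with $\alpha_i=0$.

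Finally, the root itself drops out of the computation in the first step: plugging $\partial=\lambda\,\mathbf{x}^{\boldsymbol{\alpha}}\,\partial/\partial x_i$ into $\gamma\circ\partial\circ\gamma^{-1}$ gives the scalar $\gamma_i^{-1}\prod_{j=1}^n\gamma_j^{\alpha_j}$, which is exactly the asserted character $\chi$ of $\TD$; one checks it is well-defined on $\TD$ (it clearly is, being the restriction of a character of the big torus). I expect the main obstacle to be the middle step: extracting from \cite{Lie10} the precise statement that a $\TD$-homogeneous LND on $\KK^{[n]}$ must be a single monomial times $\partial/\partial x_i$ with the monomial in its kernel — in other words, showing that a homogeneous LND of complexity-one torus type on $\AF^n$ cannot be a genuine sum of several monomial derivations and cannot have its monomial involve the variable it differentiates. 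Everything else is the routine bookkeeping of weights under the diagonal torus and the elementary verification that the candidate derivations are indeed LNDs with the stated root.
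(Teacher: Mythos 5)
Your overall route is the same as the paper's: first show that a root vector is precisely a $\TD$-homogeneous locally nilpotent derivation whose weight is the root (this is Lemma~\ref{root-hom}; your weight computation and the final verification of the character $\chi$ are correct), and then appeal to the classification of homogeneous LNDs in \cite{Lie10}. The difficulty is that the step you yourself flag as ``the main obstacle'' is where essentially all of the content lies, and you leave it unproved. A $\TD$-homogeneous derivation is in general a genuine sum of several monomial derivations, because distinct big-torus weights differing by a multiple of $(1,\ldots,1)$ collapse to the same $\TD$-weight; for instance $\partial/\partial x_1+x_1x_2x_3\,\partial/\partial x_1$ is $\TD$-homogeneous on $\KK^{[3]}$ but not locally nilpotent. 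Your assertion that ``such a sum fails to be locally nilpotent unless the extra terms vanish'' is therefore exactly the statement that needs proof, and it does not follow from homogeneity alone. The paper obtains it by realizing the $M$-graded algebra $\KK^{[n]}$ as the Altmann--Hausen algebra $A[\Delta\cdot[0]]$ over $\AF^1$ (Lemma~\ref{isom}) and then invoking the explicit description of homogeneous LNDs from \cite[Theorem 3.28]{Lie10} (Lemma~\ref{lnd}), followed by a coordinate translation via Remark~\ref{rem-isom}; none of that work, which is the substance of the argument, appears in your proposal.

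A secondary but substantive slip: you describe the resulting derivations as ``fiber derivations.'' In the terminology of \cite{Lie10} this is backwards. Since the weight cone of the $\TD$-grading on $\KK^{[n]}$ is all of $M_\QQ$, there are \emph{no} homogeneous LNDs of fiber type here, and the derivations $\mathbf{x}^{\boldsymbol{\alpha}}\,\partial/\partial x_i$ arise as LNDs of \emph{horizontal} type. This matters because the fiber/horizontal dichotomy determines which theorem of \cite{Lie10} must be applied, and the horizontal case is the one requiring the combinatorial conditions $v_j(e)\geq v_i(e)+1$ that ultimately force $\boldsymbol{\alpha}\in\ZZ_{\geq 0}^n$ with $\alpha_i=0$.
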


\section{Proof of the Theorem}

It is well known that the set $\chara(\TD)$ of characters of $\TD$
forms a lattice isomorphic to $M=\ZZ^{n-1}$. It is customary to fix an
isomorphism $M\simeq\chara(\TD)$ and to denote the character
corresponding to $m$ by $\chi^m$. The natural $\TD$-action on $\AF^n$
gives rise to an $M$-grading on $\KK^{[n]}$ given by
$$\KK^{[n]}=\bigoplus_{m\in M}B_m,\quad\mbox{where}\quad
B_m=\left\{f\in\KK^{[n]}\mid \gamma(f)=\chi^m(\gamma)f,\, \forall
  \gamma\in \TD\right\}\,.$$

An LND $\partial$ on $\KK^{[n]}$ is called homogeneous if it send
homogeneous elements into homogeneous elements.  Let $f\in
\KK^{[n]}\setminus \ker\partial$ homogeneous. We define the degree of
$\partial$ as $\deg\partial=\deg(\partial(f))-\deg(f)\in M$
\cite[Section 1.2]{Lie10}.
\begin{lemma}\label{root-hom}
  An LND on $\KK^{[n]}$ is a root vectors of $\Aut^*(\AF^n)$ with
  respect to the diagonal torus $\TD$ if and only if $\partial$ is
  homogeneous with respect to the $M$-grading on $\KK^{[n]}$ given by
  $\TD$. Furthermore, the corresponding root is the character
  $\chi^{\deg\partial}$.
\end{lemma}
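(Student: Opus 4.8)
The plan is to prove both implications by directly unwinding the conjugation relation $\gamma\circ\partial\circ\gamma^{-1}=\chi(\gamma)\cdot\partial$ in terms of the graded pieces $B_m$. The basic observation is that for homogeneous $f\in B_m$ one has $\gamma^{-1}(f)=\chi^m(\gamma)^{-1}f=\chi^{-m}(\gamma)f$, so that
\[
(\gamma\circ\partial\circ\gamma^{-1})(f)=\chi^{-m}(\gamma)\,\gamma(\partial(f)),
\]
and the whole statement reduces to comparing the right-hand side with $\chi(\gamma)\cdot\partial(f)$ after expanding $\partial(f)$ into homogeneous components.

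First I would treat the ``if'' direction. Assume $\partial$ is homogeneous, say of degree $d=\deg\partial$, so that $\partial(B_m)\subseteq B_{m+d}$ for every $m$ (the zero component causing no trouble). Then for homogeneous $f\in B_m$ we have $\gamma(\partial(f))=\chi^{m+d}(\gamma)\partial(f)$, whence
\[
(\gamma\circ\partial\circ\gamma^{-1})(f)=\chi^{-m}(\gamma)\chi^{m+d}(\gamma)\partial(f)=\chi^{d}(\gamma)\,\partial(f).
\]
Since $\KK^{[n]}$ is spanned by its homogeneous elements and both sides are $\KK$-linear in $f$, this gives $\gamma\circ\partial\circ\gamma^{-1}=\chi^{d}(\gamma)\cdot\partial$ for all $\gamma\in\TD$; hence $\partial$ is a root vector with root $\chi^{\deg\partial}$.

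For the ``only if'' direction, let $\partial\neq 0$ be a root vector with root $\chi$, and write $\chi=\chi^d$ for the unique $d\in M$ under the fixed isomorphism $M\simeq\chara(\TD)$. Fix a homogeneous $f\in B_m$ and decompose $\partial(f)=\sum_{m'\in M}g_{m'}$ with $g_{m'}\in B_{m'}$. Substituting into the displayed identity and using $\gamma(g_{m'})=\chi^{m'}(\gamma)g_{m'}$ gives, for all $\gamma\in\TD$,
\[
\sum_{m'\in M}\chi^{m'-m}(\gamma)\,g_{m'}=\chi^{d}(\gamma)\sum_{m'\in M}g_{m'}.
\]
Comparing this identity coefficient by coefficient in the monomial basis of $\KK^{[n]}$ produces, for each monomial, a scalar relation among the characters $\chi^{m'-m}$ of $\TD$; these are pairwise distinct because $M\to\chara(\TD)$ is injective, so linear independence of distinct characters of a torus forces $g_{m'}=0$ for $m'\neq m+d$. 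Thus $\partial(f)\in B_{m+d}$; as $f$ ranged over all homogeneous elements, $\partial$ is homogeneous, and choosing $f$ with $\partial(f)\neq 0$ (possible since $\partial\neq 0$) shows $\deg\partial=(m+d)-m=d$, i.e.\ the root is $\chi^{\deg\partial}$.

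The argument is essentially a bookkeeping exercise; the step that needs the most care is the reduction of the $\KK^{[n]}$-valued identity to scalar relations so that linear independence of characters applies, together with the verification that the characters $\chi^{m'-m}$ occurring there are genuinely pairwise distinct (which is where the identification $M\simeq\chara(\TD)$ enters). I do not anticipate any deeper difficulty.
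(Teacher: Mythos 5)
Your proof is correct and follows essentially the same route as the paper's: decompose $\partial(f)$ into homogeneous components and use that the conjugation identity holds for all $\gamma\in\TD$ to force all but one component to vanish. The paper compresses both directions into a single ``if and only if'' computation, whereas you spell out the easy converse and the linear-independence-of-characters step explicitly, but the underlying argument is identical.
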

\begin{proof}
  Let $\partial$ be a root vector of $\Aut^*(\AF^n)$ with root
  $\chi^e$, so that
  $\partial=\chi^{-e}(\gamma)\cdot\gamma\circ\partial\circ\gamma^{-1}$,
  $\forall\gamma\in\TD$. We consider a homogeneous element $f\in
  B_{m'}$ and we let $\partial(f)=\sum_{m\in M} g_m$, where $g_m$ is
  homogeneous, so that
  $$\sum_{m\in M}g_m=\partial(f)=\chi^{-e}(\gamma)\cdot\gamma\circ
  \partial\circ\gamma^{-1}(f)=\chi^{-e-m'}(\gamma)\sum_{m\in
    M}\chi^m(\gamma)\cdot g_m,\quad\forall\gamma\in\TD\,.$$ %
  This equality holds if and only if $g_m=0$ for all but one $m\in
  M$ i.e., if $\partial$ is homogeneous. In this case,
  $\partial(f)=g_m=\chi^{-e-m'+m}(\gamma)\cdot \partial(f)$, and so
  $e=m-m'=\deg(\partial(f))-\deg(f)=\deg\partial$.
\end{proof}

In \cite{AlHa06}, a combinatorial description of a normal affine
$M$-graded domain $A$ is given in terms of polyhedral divisors, and in
\cite{Lie10} a description of the homogeneous LNDs on $A$ is given in
terms of these combinatorial data in the case where $\trdeg A=\rank
M+1$. In the following we apply these results to compute the
homogeneous LNDs on the $M$-graded algebra $\KK^{[n]}$. First, we give
a short presentation of the combinatorial description in \cite{AlHa06}
in the case where $\trdeg A=\rank M+1$. For a more detailed treatment
see \cite[Section 1.1]{Lie10}.

Let $N$ be the dual lattice of $M$. The combinatorial description in
\cite{AlHa06} deals with the following data: A pointed polyhedral cone
$\sigma\in N_\QQ:=N\otimes \QQ$ dual to the weight cone
$\sigma^\vee\subseteq M_\QQ:=M\otimes\QQ$ of the $M$-grading; a smooth
curve $C$; and a divisor $\DD=\sum_{z\in C}\Delta_z\cdot z$ on $C$
whose coefficients $\Delta_z$ are polyhedra in $N_\QQ$ having tail
cone $\sigma$. Furthermore, if $C$ is projective we ask for the
polyhedron $\sum_{z\in C} \Delta_z$ to be a proper subset of
$\sigma$. For every $m\in \sigma^\vee\cap M$ the evaluation of $\DD$
at $m$ is the $\QQ$-divisor given by
$$\DD(m)=\sum_{z\in C}\min_{p\in \Delta_z}p(m)\,.$$
We define the $M$-graded algebra
\begin{align} \label{AH}
A[\DD]=\bigoplus_{m\in \sigma^\vee\cap
  M}A_m\chi^m,\quad\mbox{where}\quad A_m=H^0(C,\OO_C(\DD(m)))\,,
\end{align}
and $\chi^m$ is the corresponding character of the torus $\spec\KK[M]$
seen as a rational function on $\spec A$ via the embedding $\fract
\KK[M]\hookrightarrow \fract A[\DD]=\fract \KK(C)[M]$.

It follows from \cite{AlHa06} that $A[\DD]$ is an normal affine domain
and that every normal affine $M$-graded domain $A$ with $\trdeg
A=\rank M+1$ is equivariantly isomorphic to $A[\DD]$ for some
polyhedral divisor on a smooth curve, see also \cite[Theorem
1.4]{Lie10}.

\medskip

We turn back now to our particular case where we deal with the
polynomial algebra $\KK^{[n]}$ graded by $\chara(\TD)$. First, we need
to fix an isomorphism $M\simeq \chara(\TD)$. Let
$\{\mu_1,\ldots,\mu_{n-1}\}$ be the canonical basis of $M$. We define
the isomorphism $\mu_i\mapsto \chi^{\mu_i}$, $i\in\{1,\ldots,n-1\}$,
where
$$\chi^{\mu_i}:\TD\rightarrow \KK^*,\quad
\gamma=\diag(\gamma_1,\ldots,\gamma_n)\mapsto \gamma_i\,.$$ %
Since for every $\gamma\in \TD$ we have
$\gamma_n=\gamma_1^{-1}\cdots\gamma_{n-1}^{-1}$, the character mapping
$\diag(\gamma_1,\ldots,\gamma_n)\mapsto \gamma_n$ is given by
$\chi^{-\mathds{1}}$, where
$\mathds{1}:=\mu_1+\ldots+\mu_{n-1}$. Under this isomorphism, the
algebra $\KK^{[n]}$ is graded by $M$ via $\deg x_i=\mu_i$, for all
$i\in\{1,\ldots,n-1\}$ and $\deg x_n=-\mathds{1}$.

Let now $\{\nu_1,\ldots,\nu_{n-1}\}$ be a basis of $N$ dual to the
basis $\{\mu_1,\ldots,\mu_{n-1}\}$ of $M$. We also let $\Delta$ be the
standard $(n-1)$-simplex in $N_\QQ$ i.e., the convex hull of the set
$\{\nu_1,\ldots,\nu_{n-1},\bar{0}\}$.

\begin{lemma} \label{isom}
  The $M$-graded algebra $\KK^{[n]}$ is equivariantly isomorphic to
  $A[\DD]$, where $\DD$ is the polyhedral divisor $\DD=\Delta\cdot[0]$
  on $\AF^1$.
\end{lemma}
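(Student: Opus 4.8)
The plan is to exhibit an explicit equivariant isomorphism, rather than to invoke the classification of \cite{AlHa06} abstractly. First I would identify the degree-zero part of the grading. Since $\deg x_i = \mu_i$ for $i=1,\dots,n-1$ and $\deg x_n = -\mathds{1}$, the element $t := x_1\cdots x_n$ is homogeneous of degree $0$, and checking which monomials have degree $0$ shows $B_0 = \KK[t]$. This is exactly the coordinate ring of the base curve $C = \AF^1$ of $\DD$, with $[0] = \{t=0\}$. I would also note that the tail cone of the bounded polytope $\Delta$ is $\{\bar{0}\}$, so here $\sigma = \{\bar{0}\}$ and $\sigma^\vee = M_\QQ$; accordingly $A[\DD] = \bigoplus_{m\in M} A_m\chi^m$, which is consistent with the fact that the degrees of the monomials of $\KK^{[n]}$ generate all of $M$.

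Next I would compute the evaluations of $\DD$. For $m = \sum_{i=1}^{n-1} m_i\mu_i \in M$ one has $\nu_j(m) = m_j$ and $\bar{0}(m) = 0$, so the minimum of $m$ over the vertices $\nu_1,\dots,\nu_{n-1},\bar{0}$ of $\Delta$, hence over all of $\Delta$, equals $d_m := \min(0,m_1,\dots,m_{n-1})$, giving $\DD(m) = d_m\cdot[0]$. Using that $H^0(\AF^1,\OO(d\cdot[0])) = t^{-d}\KK[t]$ for every $d\in\ZZ$, we obtain $A_m = t^{-d_m}\KK[t]$, which has $\KK$-basis $\{\,t^{\,j} : j \in \ZZ,\ j \ge -d_m\,\}$.

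Then I would define the $\KK$-algebra homomorphism $\psi\colon \KK^{[n]}\to A[\DD]$ on generators by $x_i\mapsto \chi^{\mu_i}$ for $i=1,\dots,n-1$ and $x_n\mapsto t\,\chi^{-\mathds{1}}$; this is well defined because $\KK^{[n]}$ is a polynomial ring, the images lie in $A[\DD]$ because $1\in A_{\mu_i} = \KK[t]$ and $t\in A_{-\mathds{1}} = t\KK[t]$, and $\psi$ sends $B_m$ into $A_m\chi^m$ by construction. A short computation in the group algebra $\KK(t)[M]$ shows that $\psi$ carries the Laurent monomial $x_1^{m_1+j}\cdots x_{n-1}^{m_{n-1}+j}\,x_n^{\,j}$ to $t^{\,j}\chi^m$; this Laurent monomial is a genuine element of $\KK^{[n]}$ exactly when $j\ge 0$ and $m_i+j\ge 0$ for all $i$, i.e.\ exactly when $j\ge -d_m$. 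Hence $\psi$ maps the monomial basis of $B_m$ bijectively onto the basis $\{\,t^{\,j}\chi^m : j\ge -d_m\,\}$ of $A_m\chi^m$; doing this for every $m$ shows that $\psi$ is an isomorphism of $M$-graded algebras.

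The only genuine content is the bookkeeping in the last two steps: identifying the exponent $a_n = j$ of $x_n$ in a degree-$m$ monomial of $\KK^{[n]}$ with the lattice parameter of a section of $\OO(\DD(m))$, and checking the two nonnegativity conditions against the single inequality $j\ge -d_m$. Everything else is formal. As an alternative, one could appeal to \cite[Theorem 1.4]{Lie10} to know a priori that $\KK^{[n]}\cong A[\DD']$ for some polyhedral divisor $\DD'$ on a smooth curve, and then pin down the curve, the cone and the polyhedral coefficients from the invariant ring $B_0 = \KK[t]$ and the homogeneous components $B_m$; but the explicit map $\psi$ above is shorter and self-contained.
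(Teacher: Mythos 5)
Your proof is correct, but it proceeds quite differently from the paper's. The paper argues top--down: it invokes the classification of \cite{AlHa06} to know a priori that $\KK^{[n]}\simeq A[\DD']$ for some polyhedral divisor $\DD'$ on a smooth curve, then uses the toric method of \cite[Section 11]{AlHa06} (with $\TD$ a subtorus of the big torus) to force $C=\AF^1$ — ruling out $\PP^1$ because $B_{\bar 0}\supsetneq\KK$ — and to conclude that the only invariant divisor is $[0]$; the identification of the coefficient with the standard simplex is then left as ``a routine computation.'' You instead build the isomorphism bottom--up: you compute $B_{\bar 0}=\KK[x_1\cdots x_n]$, evaluate $\DD(m)=\min(0,m_1,\dots,m_{n-1})\cdot[0]$ and hence $A_m=t^{-d_m}\KK[t]$, and check that the assignment $x_i\mapsto\chi^{\mu_i}$, $x_n\mapsto t\chi^{-\mathds 1}$ matches the monomial basis of $B_m$ (monomials $x_1^{m_1+j}\cdots x_{n-1}^{m_{n-1}+j}x_n^{\,j}$ with $j\ge -d_m$) bijectively with the basis $\{t^{\,j}\chi^m: j\ge -d_m\}$ of $A_m\chi^m$; the exponent bookkeeping you carry out is exactly right. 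Your route is longer to write but more self-contained: it does not rely on the unexplained ``routine computation,'' and as a bonus it simultaneously establishes the explicit form of the isomorphism that the paper defers to Remark~\ref{rem-isom} and then uses in the proof of the main theorem. The paper's route, in exchange, explains conceptually \emph{why} the answer must have this shape (toric curve, unique invariant divisor) before any computation is done. One minor point worth making explicit in your version: the fact that $\bigoplus_m A_m\chi^m$ is closed under multiplication inside $\KK(t)[M]$ (so that $\psi$ really lands in a subalgebra) is part of the \cite{AlHa06} construction, coming from superadditivity of $m\mapsto\DD(m)$; you use it implicitly.
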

\begin{proof}
  By \cite{AlHa06}, the $M$-graded algebra $\KK^{[n]}$ is isomorphic
  to $A[\DD]$ for some polyhedral divisor $\DD$ on a smooth curve
  $C$. Since the weight cone $\sigma^\vee$ of $\KK^{[n]}$ is $M_\QQ$,
  the coefficients of $\DD$ are just bounded polyhedra in $N_\QQ$.

  Since $\AF^n$ is a toric variety and the torus $\TD$ is a subtorus
  of the big torus, we can apply the method in \cite[Section
  11]{AlHa06}. In particular, $C$ is a toric curve. Thus $C=\AF^1$ or
  $C=\PP^1$. Furthermore, the graded piece $B_{\bar{0}}\supsetneq
  \KK$ and so $C$ is not projective by \eqref{AH}. Hence $C=\AF^1$.

  The only divisor in $\AF^1$ invariant by the big torus is $[0]$, so
  $\DD=\Delta\cdot[0]$ for some bounded polyhedron $\Delta$ in
  $N_\QQ$. Finally, applying the second equation in \cite[Section
  11]{AlHa06}, a routine computation shows that $\Delta$ can be chosen
  as the standard $(n-1)$-simplex.
\end{proof}

\begin{remark} \label{rem-isom}
  Letting $\AF^1=\spec \KK[t]$, it is easily seen that the isomorphism
  $\KK^{[n]}\simeq A[\DD]$ is given by $x_i=\chi^{\mu_i}$, for all
  $i\in \{1,\ldots,n-1\}$, and $x_n=t\chi^{-\mathds{1}}$.
\end{remark}

In \cite{Lie10} the homogeneous LNDs on an normal affine $M$-graded
domain are classified into 2 types: fiber type and horizontal type. In
the case where the weight cone is $M_\QQ$, there are no LNDs of fiber
type. Thus, $\KK^{[n]}$ only admits homogeneous LNDs of horizontal
type. The homogeneous LNDs of horizontal type are described in
\cite[Theorem 3.28]{Lie10}. In the following, we specialize this
result to the particular case of $A[\DD]\simeq \KK^{[n]}$.

Let $v_i=\nu_i$, $i\in\{1,\ldots,n-1\}$ and $v_n=\bar{0}$, so that
$\{v_1,\ldots,v_n\}$ is the set of vertices of $\Delta$. For every
$\lambda\in \KK^*$, $i\in \{1,\ldots,n\}$, and $e\in M$ we let
$\partial_{\lambda,i,e}:\fract A[\DD]\rightarrow \fract A[\DD]$ be the
derivation given by
$$\partial_{\lambda,i,e}(t^r\cdot\chi^m)=\lambda(r+v_i(m))\cdot
t^{r-v_i(e)-1}\cdot\chi^{m+e},\quad \forall (m,r)\in M\times \ZZ\,.$$
\begin{lemma}[{\cite[Theorem 3.28]{Lie10}}] \label{lnd} %
  If $\partial$ is a non-zero homogeneous LND of $A[\DD]$, then
  $\partial=\partial_{\lambda,i,e}|_{A[\DD]}$ for some $\lambda\in
  \KK^*$, some $i\in \{1,\ldots,n\}$, and some $e\in M$ satisfying
  $v_j(e)\geq v_i(e)+1,\ \forall j\neq i\,.$ Furthermore, $e$ is the
  degree $\deg\partial$.
\end{lemma}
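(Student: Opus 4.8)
The plan is to derive Lemma \ref{lnd} as the specialization of the general classification \cite[Theorem 3.28]{Lie10} to the polyhedral divisor $\DD=\Delta\cdot[0]$ on $\AF^1$ supplied by Lemma \ref{isom}. Since the weight cone of $\KK^{[n]}$ is all of $M_\QQ$, the algebra $A[\DD]$ carries no homogeneous LND of fiber type, so any non-zero homogeneous LND $\partial$ is of horizontal type and \cite[Theorem 3.28]{Lie10} applies. That theorem attaches to such a $\partial$ combinatorial data --- a vertex of one of the polyhedral coefficients, a scalar $\lambda\in\KK^*$, and a degree $e\in M$ subject to admissibility inequalities --- and produces an operator of the form $\partial_{\lambda,i,e}$. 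My first task is to read off what these data must be when $C=\AF^1$ and $\DD=\Delta\cdot[0]$: the only marked point of $\DD$ is $[0]$, the vertices of $\Delta_{[0]}=\Delta$ are exactly $v_1,\ldots,v_n$, so the chosen vertex is some $v_i$, and I would check that the admissibility inequalities of \cite[Theorem 3.28]{Lie10} collapse to the single condition $v_j(e)\geq v_i(e)+1$ for all $j\neq i$.

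To make the statement self-checking I would verify the forward implication by a direct computation inside $\fract A[\DD]$. Because $\DD(m)=(\min_k v_k(m))\cdot[0]$ and $A_m=H^0(\AF^1,\OO_C(\DD(m)))$, a monomial $t^r\chi^m$ lies in $A[\DD]$ precisely when $r+v_k(m)\geq 0$ for every $k\in\{1,\ldots,n\}$. Applying $\partial_{\lambda,i,e}$ sends this monomial to a scalar multiple of $t^{\,r-v_i(e)-1}\chi^{m+e}$, whose membership in $A[\DD]$ requires $r+v_k(m)+v_k(e)-v_i(e)-1\geq 0$ for all $k$. For $k=i$ this is $r+v_i(m)-1\geq 0$, which is forced whenever the coefficient $r+v_i(m)$ is non-zero; for $k\neq i$, using $r+v_k(m)\geq 0$ and letting $r+v_k(m)=0$ (which is attainable), it reduces exactly to $v_j(e)\geq v_i(e)+1$. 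Thus the displayed inequalities are equivalent to $\partial_{\lambda,i,e}$ preserving $A[\DD]$, and since $\partial_{\lambda,i,e}$ raises the degree of every homogeneous element by $e$, I would record that $\deg\partial=e$.

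Local nilpotency is then automatic rather than an additional hypothesis. Iterating the defining formula one computes
$$\partial_{\lambda,i,e}^{\,k}(t^r\chi^m)=\lambda^k\,(r+v_i(m))(r+v_i(m)-1)\cdots(r+v_i(m)-k+1)\,t^{\,r-kv_i(e)-k}\,\chi^{m+ke},$$
and on $A[\DD]$ one has $r+v_i(m)\geq 0$, so this falling factorial vanishes as soon as $k>r+v_i(m)$. Hence $\partial_{\lambda,i,e}$ annihilates every monomial, and therefore every element, of $A[\DD]$, confirming that each operator in the asserted list is genuinely a homogeneous LND of degree $e$.

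The one direction I would not attempt to reprove is the converse: that no homogeneous LND escapes this list. This is precisely the content of \cite[Theorem 3.28]{Lie10} and rests on the general structure theory of horizontal LNDs on affine $T$-varieties of complexity one, i.e.\ with $\trdeg A[\DD]=\rank M+1$. The main obstacle in the write-up is therefore not a new calculation but the faithful translation of the general combinatorial hypotheses of that theorem into the clean inequality $v_j(e)\geq v_i(e)+1$, together with the verification that the marked point and the chosen vertex are forced to be $[0]$ and one of the $v_i$; the membership, degree, and local-nilpotency computations above then serve to confirm that the specialized form is both consistent and exhaustive.
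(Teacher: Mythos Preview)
Your proposal is correct and, in fact, goes beyond what the paper does. In the paper this lemma carries no proof at all: it is simply quoted as a specialization of \cite[Theorem 3.28]{Lie10}, with the surrounding text observing that there are no fiber-type LNDs (since $\sigma^\vee=M_\QQ$) and that the vertices of $\Delta$ are $v_1,\ldots,v_n$. You reproduce exactly this reduction and then add two self-contained checks the paper omits --- that the inequality $v_j(e)\geq v_i(e)+1$ is equivalent to $\partial_{\lambda,i,e}$ preserving $A[\DD]$, and that the falling-factorial formula forces local nilpotency. Those extra verifications are sound (your attainability claim ``$r+v_k(m)=0$ is achievable with $r+v_i(m)>0$'' holds in every case, though you might spell out one witness), and your explicit acknowledgment that the exhaustiveness direction is imported from \cite{Lie10} matches the paper's stance precisely.
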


\begin{proof}[Proof of Theorem~\ref{theo}]
  By Lemma~\ref{root-hom} the root vectors of $\KK^{[n]}$ correspond
  to the homogeneous LNDs in the $M$-graded algebra $\KK^{[n]}$. But
  the homogeneous LNDs on $A[\DD]\simeq \KK^{[n]}$ are given in
  Lemma~\ref{lnd}, so we only need to translate the homogeneous LND
  $\partial=\partial_{\lambda,i,e}|_{A[\DD]}$ in Lemma~\ref{lnd} in
  terms of the explicit isomorphism given in Remark~\ref{rem-isom}.

  Let $e=(e_1,\ldots,e_{n-1})$ and $i\in \{1,\ldots,n-1\}$, so that
  $v_i=\mu_i$. The condition $v_j(e)\geq v_i(e)+1$ yields $e_i\leq -1$
  and $e_j\geq e_i+1$, $\forall j\neq i$. Furthermore,
  $\partial(x_k)=\partial(\chi^{\mu_k})=0$, for all $k\neq i$,
  $k\in\{1,\ldots,n-1\}$,
  $\partial(x_n)=\partial(t\chi^{-\mathds{1}})=0$, and
  $$\partial(x_i)=\partial(\chi^{\mu_i})=\lambda
  t^{-e_i-1}\chi^{e+\mu_i}= \lambda
  t^{-e_i-1}\chi^{(e_i+1)\mathds{1}}\chi^{e+\mu_i-
    (e_i+1)\mathds{1}}=\lambda\mathbf{x}^{\boldsymbol{\alpha}}\,,$$
  where $\alpha_i=0$, $\alpha_n=-e_i-1\geq 0$, and
  $\alpha_k=e_k-e_i-1\geq 0$, for all $k\neq i$,
  $k\in\{1,\ldots,n-1\}$. Hence,
  $\partial=\lambda\cdot\mathbf{x}^{\boldsymbol{\alpha}}
  \cdot\tfrac{\partial}{\partial x_i}$, for some $\lambda\in \KK^*$,
  some $i\in \{1,\ldots,n\}$, and some
  $\boldsymbol{\alpha}\in\ZZ_{\geq 0}^n$ such that $\alpha_i=0$.

  Let now $e=(e_1,\ldots,e_{n-1})$ and $i=n$, so that $v_n=0$.  The
  condition $v_j(e)\geq v_n(e)+1$ yields $e_j\geq 1,\, \forall
  j\in\{1,\ldots,n-1\}$. Furthermore,
  $\partial(x_k)=\partial(\chi^{\mu_k})=0$, $k\in\{1,\ldots,n-1\}$, and
  $$\partial(x_n)=\partial(t\chi^{-\mathds{1}})=\lambda
  \chi^{e-\mathds{1}}=\lambda\mathbf{x}^{\boldsymbol{\alpha}}\,,$$
  where $\alpha_n=0$, and $\alpha_k=e_k-1\geq 0$, for all
  $k\in\{1,\ldots,n-1\}$. Hence,
  $\partial=\lambda\cdot\mathbf{x}^{\boldsymbol{\alpha}}
  \cdot\tfrac{\partial}{\partial x_n}$, for some $\lambda\in \KK^*$,
  some $i\in \{1,\ldots,n\}$, and some
  $\boldsymbol{\alpha}\in\ZZ_{\geq 0}^n$ such that $\alpha_n=0$.

  The last assertion of the theorem follows easily from the fact that
  the root corresponding to the homogeneous LND $\partial$ is the
  character $\chi^{\deg\partial}$.
\end{proof}

Finally, we describe the characters that appear as a root of
$\Aut^*(\AF^n)$.

\begin{corollary}
  The character $\chi\in\chara(\TD)$ given by
  $\diag(\gamma_1,\ldots,\gamma_n)\mapsto
  \gamma_1^{\beta_1}\cdots\gamma_n^{\beta_n}$ is a root of
  $\Aut^*(\AF^n)$ with respect to the diagonal torus $\TD$ if and only
  if the minimum of the set $\{\beta_1,\ldots,\beta_n\}$ is achieved
  by one and only one of the $\beta_i$.
\end{corollary}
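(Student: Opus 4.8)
The plan is to read the statement off Theorem~\ref{theo}, which already lists all roots of $\Aut^*(\AF^n)$ relative to $\TD$ as the characters $\gamma\mapsto\gamma_i^{-1}\prod_{j=1}^n\gamma_j^{\alpha_j}$ with $\boldsymbol{\alpha}\in\ZZ_{\geq 0}^n$ and $\alpha_i=0$. The only genuinely new point is bookkeeping: the exponent vector representing a character of $\TD$ is not unique. Writing $\chi_{\boldsymbol{\beta}}$ for the character $\diag(\gamma_1,\ldots,\gamma_n)\mapsto\gamma_1^{\beta_1}\cdots\gamma_n^{\beta_n}$, the relation $\gamma_1\cdots\gamma_n=1$ on $\TD$ shows that $\chi_{\boldsymbol{\beta}}=\chi_{\boldsymbol{\beta}'}$ if and only if $\boldsymbol{\beta}-\boldsymbol{\beta}'=c\cdot\mathbf{1}$ for some $c\in\ZZ$, where $\mathbf{1}=(1,\ldots,1)$. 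So I would first record that the property ``$\min\{\beta_1,\ldots,\beta_n\}$ is attained by exactly one index'' is unchanged under $\boldsymbol{\beta}\mapsto\boldsymbol{\beta}+c\cdot\mathbf{1}$, hence is a well-defined condition on the character $\chi_{\boldsymbol{\beta}}$.

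For the ``only if'' implication: if $\chi_{\boldsymbol{\beta}}$ is a root, Theorem~\ref{theo} gives an index $i$ and $\boldsymbol{\alpha}\in\ZZ_{\geq 0}^n$ with $\alpha_i=0$ such that $\chi_{\boldsymbol{\beta}}(\gamma)=\gamma_i^{-1}\prod_j\gamma_j^{\alpha_j}$; thus $\chi_{\boldsymbol{\beta}}=\chi_{\boldsymbol{\beta}'}$ with $\beta'_i=-1$ and $\beta'_j=\alpha_j\geq 0$ for $j\neq i$. In this particular representative the minimum equals $-1$ and is attained only at $i$, and by the invariance noted above the same holds for the given $\boldsymbol{\beta}$.

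For the ``if'' implication: assume $\min\{\beta_1,\ldots,\beta_n\}$ is attained only at $i$. Replacing $\boldsymbol{\beta}$ by $\boldsymbol{\beta}-(\beta_i+1)\cdot\mathbf{1}$ we may assume $\beta_i=-1$; since the $\beta_j$ are integers and $\beta_j>\beta_i$ for $j\neq i$, we get $\beta_j\geq 0$ for all $j\neq i$. Put $\alpha_i=0$ and $\alpha_j=\beta_j$ for $j\neq i$, so $\boldsymbol{\alpha}\in\ZZ_{\geq 0}^n$. The elementary derivation $\partial=\mathbf{x}^{\boldsymbol{\alpha}}\tfrac{\partial}{\partial x_i}$ is an LND on $\KK^{[n]}$ (it annihilates every $x_j$ with $j\neq i$ and $\partial^2(x_i)=0$ because $\mathbf{x}^{\boldsymbol{\alpha}}$ does not involve $x_i$) and it is homogeneous for the $M$-grading; by Lemma~\ref{root-hom} it is a root vector, and its root is $\chi^{\deg\partial}$ with $\deg\partial=\deg\partial(x_i)-\deg x_i=\deg\mathbf{x}^{\boldsymbol{\alpha}}-\deg x_i$, i.e. the character $\gamma\mapsto\gamma_i^{-1}\prod_j\gamma_j^{\alpha_j}=\chi_{\boldsymbol{\beta}}$. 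Hence $\chi_{\boldsymbol{\beta}}$ is a root.

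The only subtlety I anticipate is the one already flagged: the non-uniqueness of the exponent vector and the fact that the normalization $\beta_i=-1$ is reachable by an integer translation, which works precisely because the $\beta_j$ are integers, so a strict inequality among them is an inequality with gap at least $1$. Beyond that the argument is a direct translation of Theorem~\ref{theo} together with Lemma~\ref{root-hom}.
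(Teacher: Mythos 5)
Your proof is correct and follows the same route as the paper: read the list of roots off Theorem~\ref{theo} and account for the non-uniqueness of the exponent vector via the relation $\gamma_1\cdots\gamma_n=1$ on $\TD$. You are in fact more careful than the paper on two points --- you check that the ``unique minimum'' condition is invariant under $\boldsymbol{\beta}\mapsto\boldsymbol{\beta}+c\cdot\mathbf{1}$, so the statement is well posed on characters, and you supply the converse direction (that every $\mathbf{x}^{\boldsymbol{\alpha}}\tfrac{\partial}{\partial x_i}$ with $\alpha_i=0$ really is a homogeneous LND, hence a root vector by Lemma~\ref{root-hom}), which Theorem~\ref{theo} as literally stated does not give.
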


\begin{proof}
  By Theorem~\ref{theo}, the roots of $\Aut^*(\AF^n)$ are the
  characters $\diag(\gamma_1,\ldots,\gamma_n)\mapsto
  \gamma_1^{\beta_1}\cdots\gamma_n^{\beta_n}$, where $\beta_i=-1$ for
  some $i\in\{1,\ldots,n\}$ and $\beta_j\geq 0$ $\forall j\neq i$. The
  corollary follows from the fact that $\gamma_1\cdots\gamma_n=1$.
\end{proof}

\section*{Acknowledgment}

The author is grateful to Ivan Arzhantsev who pointed out this problem
to us.

\bibliographystyle{alpha}
\bibliography{../math}

\end{document}